\documentclass[11pt]{amsart}
\usepackage{amssymb}
\usepackage{xypic}
\tolerance=10000
\xyoption{all}

\newtheorem{thm}{Theorem}[section] 

\newtheorem{cor}[thm]{Corollary}

\newtheorem{exam}[thm]{Example}

\newtheorem{lem}[thm]{Lemma}
\newtheorem{prop}[thm]{Proposition}

\newtheorem{ques}[thm]{Question}

\def\[{\left[}
\def\]{\right]}


\long\def\forget#1\forgotten{{}}

\def\N{{\mathbb{N}}}

\begin{document}

\title{Free Subalgbras of Graded Algebras}
\author{Jason P. Bell}
\thanks{The research of the first-named author  was supported by NSERC Grant 326532-2011.}
\address{ Department of Mathematics, University of Waterloo, 
Waterloo, ON, N2L  3G1, CANADA}
\email{jpbell@uwaterloo.ca}

\author{Be'eri Greenfeld}

\address{Department of Mathematics, Bar Ilan University, Ramat Gan 5290002, Israel}
\email{beeri.greenfeld@gmail.com}
\keywords{Graded algebras, graded nil rings, graded nilpotent rings, free subalgebras, Jacobson radical, monomial algebras, K\"othe conjecture, combinatorics on words, morphic words}

\subjclass[2010]{16W50, 05A05}

\begin{abstract} Let $k$ be a field and let $A=\bigoplus_{n\ge 1}A_n$ be a positively graded $k$-algebra.  We recall that $A$ is graded nilpotent if for every $d\ge 1$, the subalgebra of $A$ generated by elements of degree $d$ is nilpotent.  We give a method of producing grading nilpotent algebras and use this to prove that over any base field $k$ there exists a finitely generated graded nilpotent algebra that contains a free $k$-subalgebra on two generators. 
\end{abstract}
\maketitle
\section{Introduction}

In recent years there has been a flurry of activity in ring theory around problems related to the Kurosh and K\"othe conjectures (see, for example, \cite{Kurosh_General, AgataTom, AgataEdmund, AgataEdmund2, Koethe_Survey, Primitive_Ideals_GrNil}). The Kurosh conjecture, which asserts that a finitely generated algebra that is algebraic over the base field is necessarily finite-dimensional, was disproved in 1964 by Golod and Shafarevich \cite{GolodShafarevich}, who constructed a counter-example to both this conjecture and to the Burnside problem, which is the group theoretic analogue of the Kurosh problem.  The K\"othe conjecture, on the other hand is still open.  It asks whether the sum of two nil left ideals in a ring is again a nil left ideal.  This was proved for uncountable base fields by Amitsur \cite{Amitsur_Koethe} in 1956, but the conjecture remains open in general. 

K\"othe's conjecture is equivalent to the statement that if $R$ is a nil ring then $R[x]$ is equal to its own Jacobson radical (see Krempa \cite{Krempa} for this and other equivalent statements). Towards better understanding this conjecture, Amitsur \cite{Amitsur_Question} conjectured that if $R$ is nil then in fact $R[x]$ is nil, too, which is a stronger conjecture. Again, Amitsur proved that this is true when $R$ is an algebra over an uncountable base field \cite{Amitsur_Polynomial}, but Smoktunowicz \cite{AmitsurAgata} later constructed counter-examples to Amitsur's conjecture over countable fields.  Her constructions have since provided impetus for a lot of the recent work around nil rings and questions inspired by K\"othe's conjecture and related problems.

We observe that if one takes a nil ring $R$ and one grades $R[x]$ by letting the elements of $R$ be of degree zero and letting the central variable $x$ have degree one, then while Smoktunowicz has shown that $R[x]$ need not be nil, we do have that $R[x]$ is \emph{graded nil}; that is, every homogeneous element of $R[x]$ is nilpotent.  Smoktunowicz \cite{AgataGrNil} and Regev \cite{Regev} attribute to Small and Zelmanov (asked in 2006) the question as to whether over uncountable base fields one has that finitely generated graded nil algebras must be nil. Smoktunowicz \cite{AgataGrNil} gave an example, showing that this question has a negative answer.  On the other hand, Regev \cite{Regev} showed that for finitely generated algebraic algebras over uncountable base fields, one has that the associated graded algebra (with respect to a standard filtration) is necessarily nil. We note that the opposite direction of the question is trivially true, but one has even more: the Jacobson radical of a graded ring is generated by homogeneous elements and is graded nil \cite{Bergman}.

Let $k$ be a field and let $A=A_1\oplus A_2 \oplus \cdots $ be a positively graded $k$-algebra; that is, $A$ is a $k$-algebra and each $A_i$ is a $k$-vector subspace of $A$ with the property that $A_i A_j \subseteq A_{i+j}$ for all $i,j\ge 1$. Following \cite{GLSZ}, we say that $A$ is \textit{graded nilpotent} if the algebra generated by any set of homogeneous elements of the same degree is nilpotent. Answering a question posed in \cite{GLSZ}, the first-named author and Madill \cite{Iterative} constructed finitely generated, graded nilpotent algebras which are not nilpotent. 

On the other hand, over a given countable base field, Smoktunowicz \cite{ML-Agata} constructed a nil algebra $R$ such that the polynomial ring in six variables, $x_1,\ldots ,x_6$, over $R$ contains a free algebra on two generators; in fact, one can take the generators to be homogeneous linear polynomials.  By again giving the elements of $R$ degree zero and giving $x_i$ degree $e_i\in \mathbb{N}^6$ (where $e_i$ is the vector with a $1$ in the $i$-th coordinate and zeros everywhere else), then we see that $R[x_1,\dots,x_6]$ is a graded nil algebra (with an $\mathbb{N}^6$-grading) that contains a free subalgebra on two generators.

At the moment, we do not know if there exists a nil algebra $R$ such that $R[x]$ contains a free subalgebra, since it is possible that as one adjoins variables in Smoktunowicz's construction that at some step one obtains an algebra that is neither nil nor contains a free algebra. 

Our main result is the following, which can be thought of as a continuation of Smoktunowicz's result \cite{ML-Agata} to the uncountable base field case.  (See Theorem \ref{thm: construction} for a more precise statement.)

\begin{thm} \label{Graded_Nil_Free_Subalgebra}
Let $k$ be a field. There exists a finitely generated positively graded $k$-algebra $A$, generated in degrees one and two, that is graded nilpotent and which contains a copy of the free algebra on two generators.  Moreover, the algebra $A$ can be taken to be a monomial algebra.
\end{thm}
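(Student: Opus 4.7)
My plan is to construct $A$ explicitly as a monomial algebra whose defining relations come from an infinite morphic word; this kind of construction sits squarely in the combinatorics-on-words framework highlighted by the keywords and already used by Bell--Madill \cite{Iterative}. Concretely, I will fix a finite alphabet $X = X_1 \sqcup X_2$, declaring the letters of $X_1$ to be degree $1$ and those of $X_2$ to be degree $2$, and I will choose a morphism $\varphi \co X^* \to X^*$ with a prolongable letter so that iterating $\varphi$ produces an infinite word $w \in X^{\mathbb{N}}$. The algebra $A$ is then defined by generators $X$ (with the given degrees), modulo the ideal spanned by those monomials in $X$ that do not occur as a factor of $w$. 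By construction, $A$ is a positively graded monomial algebra generated in degrees one and two, and the desired moreover-statement will be automatic.

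For the graded nilpotence condition, I will fix $d \ge 1$ and analyze the subalgebra $B_d$ generated by $A_d$. A spanning set for $B_d$ consists of products of monomials, each of total degree $d$, that survive in $A$; i.e.\ whose concatenations remain factors of $w$. The key point is that iterated morphic words admit a hierarchical block decomposition: every sufficiently long factor of $w$ has a unique (or nearly unique) parsing into $\varphi$-images of shorter factors. I plan to exploit this rigidity to show that, once a product in $B_d$ is long enough, the positions at which one could cut it into degree-$d$ pieces are forced to be incompatible with the $\varphi$-hierarchy, so the corresponding word fails to be a factor of $w$ and therefore vanishes in $A$. This gives an explicit bound $N(d)$ so that $B_d^{N(d)} = 0$.

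For the free subalgebra I will identify two homogeneous elements $f, g \in A$ (of low degree, most likely combinations of elements of $X_1$, possibly together with well-chosen elements of $X_2$ acting as ``separators'') such that every nonempty noncommutative word in $f, g$ expands into a sum of monomials at least one of which is a factor of $w$. To arrange this, I will build $\varphi$ so that its fixed point $w$ contains, as factors, enough patterns of the form $\ldots f \ldots g \ldots f \ldots g \ldots$ to witness the nonvanishing of every such evaluation; this is the role of the degree-$2$ letters, which give me room to insert controlled ``spacer'' blocks that stabilize the morphic image while preserving free evaluation. A careful combinatorial argument on factors of $w$ then yields that the $k$-subalgebra $k\langle f, g \rangle \sub A$ is free of rank $2$.

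The main obstacle, and the point on which the argument has to be tuned delicately, is the tension between the two required properties: graded nilpotence forces the factor set of $w$ to be combinatorially sparse in every ``degree-$d$ direction'', while the free subalgebra forces $w$ to be rich enough to realize an exponential family of bi-words in $f$ and $g$. Threading this needle is where the choice of $\varphi$ and of the split $X = X_1 \sqcup X_2$ into degrees $1$ and $2$ really matters; I expect the proof to consume most of its effort in verifying that a single morphism can simultaneously enforce the block-rigidity needed for Step~2 and the existence of the two-generator free pattern needed for Step~3. Once such a $\varphi$ is in hand, the field $k$ plays no role, so the conclusion holds over an arbitrary base field, as required.
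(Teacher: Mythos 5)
There is a genuine gap, and it is structural rather than a matter of tuning: your plan takes $A=A_w$ for a single word $w$ that is the fixed point of a morphism $\varphi$ on a finite alphabet, but no such algebra can contain a free subalgebra on two generators. A pure morphic word has factor complexity $O(n^2)$ (Pansiot's classification; see Allouche--Shallit), so the monomial algebra of its factors has polynomial growth, whereas any subalgebra generated by two elements $f,g$ that is free must have $\dim (k+kf+kg)^n \ge 2^n$, which is impossible inside an algebra of subexponential growth. Concretely, your Step~3 needs, for each of the exponentially many words in $f,g$ of length $n$, at least one surviving monomial in its support, and these supports are pairwise disjoint once you separate by block structure; a morphic word simply does not have enough factors to supply them. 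The ``tension'' you flag at the end is therefore not a delicate balance to be threaded by a clever choice of $\varphi$ --- it is an outright obstruction to using one morphic word.

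The paper resolves this by decoupling the two requirements. It first uses a morphic word $w$ over $\{x,y\}$ (with $\deg x=1$, $\deg y=2$) whose iterative algebra is graded nilpotent, via an arithmetic-progression criterion involving the incidence matrix $M(\phi)$ and a determinant condition (Proposition \ref{prop: grnilpotent}); this is roughly the role you assign to ``block rigidity,'' though the actual mechanism is an argument about infinite arithmetic progressions in the sequence of partial weights, not unique parsing. It then takes a disjoint primed copy $w'$ of $w$ and splices the two into a word $\tilde w$ over $\{x,y,x',y'\}$ whose pattern of switches between primed and unprimed blocks realizes \emph{every} finite sequence of positive integers (Lemma \ref{sequence}). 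The word $\tilde w$ is not morphic and has exponential factor complexity, which is what makes room for the free subalgebra on $x+y$ and $x'+y'$; graded nilpotence survives because the letter-identifying projection $\{x,y,x',y'\}\to\{x,y\}$ sends $\tilde w$ to $w$, so any long product of degree-$d$ homogeneous monomials in $A_{\tilde w}$ would project to one in $A_w$. If you want to salvage your approach, you must replace the single fixed point $w$ by some such non-morphic interleaving of two copies on disjoint alphabets.
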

We recall that an algebra is a monomial algebra if it is a quotient of a free algebra by an ideal generated by monomials in the generators. Monomial algebras are often useful in constructing counter-examples, because the family of finitely generated monomial algebras is very large (having the same size as the power set of $\mathbb{N}$) and because one can often employ combinatorial techniques to obtain ring theoretic information. Note that the constructed algebras $A$ are not Jacobson radical, since the Jacobson radical of a finitely generated monomial algebra is locally nilpotent \cite{JacLocNilp}.

Anick \cite{Anick} proved that a finitely presented monomial algebra of infinite Gelfand-Kirillov dimension must contain a free subalgebra on two generators that is generated by monomials. In addition, he asked whether a connected graded, finitely presented algebra (resp. an algebra with finite global dimension) of exponential growth must contain a homogeneous free subalgebra \cite[Question 1]{Anick}. In a similar vein, Smoktunowicz \cite{subalg} proved that finitely presented graded algebras with sufficiently sparse relations must contain free subalgebras generated by monomials.

We mention that in the context of graded (or monomial) algebras, one can distinguish between certain types of free subalgebras:
\begin{enumerate}
\item[($a$)] free subalgebras with no assumptions on the structure of the generators;
\item[($b$)] free subalgebras generated by homogeneous elements;
\item[($c$)] free subalgebras generated by monomials.
\end{enumerate}
We observe that $$(c)\implies (b)\implies (a).$$
Theorem \ref{Graded_Nil_Free_Subalgebra} shows in particular that for a graded algebra we do not have $(a)\implies (b)$. As an additional observation, we give an example that shows the implication $(b)\implies (c)$ does not hold in general (see Example \ref{exam: bnotc}).  Thus the notions of existence of a free subalgebra and of existence of a homogeneous free subalgebra and of existence of a monomial free subalgebra are pairwise inequivalent.

The outline of this paper is as follows. In \S 2, we give an easily verified criterion for constructing graded nilpotent monomial algebras that are not nil.  In \S 3 we prove Theorem \ref{Graded_Nil_Free_Subalgebra}. Our construction involves taking a graded nilpotent algebra $A$ that is not nil that is constructed in \S 2.  Then the free product of $A$ with itself contains a free algebra on two generators since $A$ is not nil.  We show that by taking an appropriate homomorphic image of this free product, we obtain an algebra that is graded nilpotent but that still contains a free subalgebra on two generators. Finally, in \S 4 we discuss a not-so-well known example of Rowen's \cite{Rowen} and make the remark that Rowen's construction yields graded nil, finitely generated algebras that are not nil. Moreover, we interpret Rowen's algebra as the monomial algebra attached to the Thue-Morse sequence.
\section{Constructing graded nilpotent algebras}

In this section we give a general method of constructing graded nilpotent algebras. This is done via the use of \emph{iterative algebras}, which are a class of monomial algebras that were introduced by the first-named author and Madill \cite{Iterative}.  We quickly recall the construction here.

Let $\Sigma$ be a finite alphabet and let $\Sigma^*$ denote the free monoid on $\Sigma$, with $\varepsilon$ denoting the empty word.  Then a monoid endomorphism $\phi :\Sigma^*\to \Sigma^*$ is completely determined by the images of the elements of $\Sigma$.  We say that $a\in\Sigma$ is \emph{mortal} if there is some $j$ such that $\phi^j(a)=\varepsilon$, and we let $\Gamma$ denote the submonoid of $\Sigma^*$ generated by the mortal letters.  

The morphism $\phi$ is \emph{prolongable} on $b\in \Sigma$ if $\phi(b)=bx$ with $x\in \Sigma^*\setminus \Gamma$.  In the case that we have such a $\phi$, we can construct a right-infinite word
$$w=bx\phi(x)\phi^2(x)\ldots $$ over the alphabet $\Sigma$, which is the unique right-infinite fixed point of $\phi$ starting with the letter $b$.  Given a field $k$, we let $k\langle \Sigma\rangle$ denote the free $k$-algebra on the alphabet $\Sigma$. An iterative algebra is then the algebra $k\langle \Sigma\rangle/I,$ where $I$ is the ideal generated by all words in $\Sigma^*$ that do not occur as a subword of $w$.  We denote this algebra by $A_w$, where the base field $k$ is understood.  If $\Sigma=\{x_1,\ldots ,x_d\}$, then we can give the algebra $A_w$ an $\mathbb{N}$-grading by declaring that $x_i$ has degree $p_i$ for some $p_i>0$.  We call the row vector $[p_1,\ldots ,p_d]$ the \emph{weight} vector associated to our grading.  

A distinguished subclass of monoid endomorphisms are the so-called \emph{primitive} morphisms. These are morphisms $\phi$ with the property that for each pair of letters $a,b\in \Sigma$ there is some $n$ such that $b$ occurs in $\phi^n(a)$. 

Given our prolongable endomorphism $\phi$ of the alphabet $\Sigma=\{x_1,\ldots ,x_d\}$, we can associate a $d\times d$ matrix, $M(\phi)$, which is called the \emph{incidence matrix}.  The $(i,j)$-entry of $M(\phi)$ is the number of occurrences of $x_i$ in $\phi(x_j)$.  Given a word $u\in \Sigma^*$, we can then associate an $m\times 1$ integer vector $\theta(u)$ whose $j$-th coordinate is the number of occurrences of $x_j$ in $u$. Then \cite[Proposition 8.2.2]{AS} shows that we have the relationship
\begin{equation}
\theta(\phi^n(u)) = M(\phi)^n \theta(u).\label{eq: theta}
\end{equation}
Ultimately, we will think of giving the letters $x_1,\ldots ,x_d$ weights and so if $x_i$ has degree $p_i$ for $i=1,\ldots ,d$ then $[p_1,\ldots ,p_d]\cdot \theta(u)$ is simply the weight of $u$ and 
$[p_1,\ldots ,p_d]M(\phi)^n \theta(u)$ is consequently the weight of $\phi^n(u)$.  

Our main result of this section is the following proposition, which gives a criterion for guaranteeing that an iterative algebra is graded nilpotent.  

\begin{prop} Let $\Sigma=\{x_1,\ldots ,x_d\}$ be a finite alphabet, let $\phi:\Sigma^*\to \Sigma^*$ be a primitive morphism that is prolongable on $x_1$, and let $w$ be the unique right-infinite word whose first letter is $x_1$ that is fixed by $\phi$.  Assume, in addition, that the following hold:
\begin{enumerate} 
\item we can write $w=ux_1w'$ where $w'$ is a right-infinite word and $u$ is a finite non-empty word;
\item there exist positive integers $p_1,\ldots ,p_d$ that are not all the same such that there is no $N>1$ such that the integers in the sequence $$[p_1,\ldots ,p_d]M(\phi)^j \theta(u)$$ for $j=0,1,\ldots $ are all divisible by $N$;
\item $\det(M(\phi))\in \{-1,1\}$.
\end{enumerate}
Then the positive part of the algebra $A_w$, in which $x_i$ is given degree $p_i$ for $i=1,\ldots ,d$, is graded nilpotent 
\label{prop: grnilpotent}
\end{prop}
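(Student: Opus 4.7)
The plan is to argue the contrapositive: fix $N \geq 2$ and suppose the subalgebra $B_N \sub A_w$ generated by the weight-$N$ homogeneous component is not nilpotent. I will derive that $N \mid w_j := [p_1,\ldots,p_d]\,M(\phi)^j\,\theta(u)$ for every $j \geq 0$, contradicting condition (2). First, since $A_w$ is a monomial algebra whose weight-$N$ monomials are exactly the weight-$N$ subwords of $w$, the subalgebra $B_N$ is spanned by those subwords of $w$ that decompose as concatenations of weight-$N$ subwords of $w$; nilpotence of $B_N$ is equivalent to a uniform bound on the number of such factors. Writing $W(n) = \sum_{i \leq n} p_{w_i}$ for the cumulative weight along $w$, non-nilpotence of $B_N$ means that for every $K$ there exist positions $q_0 < q_1 < \cdots < q_K$ in $w$ with $W(q_i) - W(q_{i-1}) = N$ for each $i$. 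I shall call such a configuration an $(N,K)$-chain.

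The second step combines condition (3) with the self-similarity of $w$. Since $\det M(\phi) = \pm 1$, the reduction of $M(\phi)$ modulo $N$ lies in the finite group $\GL_d(\Z/N\Z)$, so some power $M(\phi)^T$ is the identity modulo $N$, and consequently the sequence $(w_j \bmod N)_{j \geq 0}$ is purely periodic of period dividing $T$. It therefore suffices to show $N \mid w_j$ for $j = 0, 1, \ldots, T-1$. On the combinatorial side, primitivity of $\phi$ makes $w$ linearly recurrent, so for each such $j$ there is a threshold $L_j$ such that every factor of $w$ of length at least $L_j$ contains $\phi^j(u)\,x_1$ as a subword; the word $\phi^j(u)\,x_1$ is itself a prefix of $w$, because $w = \phi^j(u)\,\phi^j(x_1)\,\phi^j(w')$ and $\phi^j(x_1)$ begins with $x_1$.

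The heart of the argument is then the following combinatorial lemma: if an $(N,K)$-chain is so long that its spanning interval $I = [q_0, q_K]$ in $w$ contains an occurrence of $\phi^j(u)\,x_1$, then $N \mid w_j$. The breakpoints of the chain inside $I$ are exactly the positions $q \in I$ with $W(q) \equiv W(q_0) \pmod N$; on the other hand, the rigidity of the level-$j$ block decomposition of $w$, via Moss\'e-type recognizability for primitive aperiodic morphisms, forces the left and right endpoints of any $\phi^j(u)$-occurrence inside $I$ to share the same $W$-residue modulo $N$, so the weight $w_j$ across that occurrence must be divisible by $N$. Applying this for $j = 0, 1, \ldots, T-1$ at once, by taking $K$ large enough that $|I| \geq \max_{j < T} L_j$, and combining with the periodicity from the previous paragraph, yields $N \mid w_j$ for every $j$, the desired contradiction.

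The main obstacle is the combinatorial lemma itself: extracting the divisibility $N \mid w_j$ from the mere presence of a $\phi^j(u)\,x_1$-factor inside an $(N,K)$-chain. Non-canonical occurrences of $\phi^j(u)$ in $w$ need not respect the chain's modular arithmetic a priori, so one must carefully use the uniqueness of the $\phi^j$-parsing of $w$ (i.e.\ recognizability) to ensure that each such occurrence is genuinely aligned with the chain's breakpoints. Once this alignment is in hand, everything else is formal bookkeeping around the periodicity of $(w_j \bmod N)$ and the use of linear recurrence to realize occurrences of $\phi^j(u)\,x_1$ inside $I$ for all small $j$ simultaneously.
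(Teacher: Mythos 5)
Your outer scaffolding is sound and partly improves on the paper: reducing non-nilpotence of $B_N$ to arbitrarily long weight-$N$ chains, invoking uniform recurrence of $w$ to place occurrences of $\phi^j(u)x_1$ inside long chains, and using $\det M(\phi)=\pm 1$ to make $M(\phi)\bmod N$ an element of the finite group $\GL_d(\Z/N\Z)$, so that $(w_j \bmod N)_j$ is purely periodic, is a clean substitute for the paper's Cayley--Hamilton step. But the heart of your argument --- the ``combinatorial lemma'' that an occurrence of $\phi^j(u)x_1$ inside a long $(N,K)$-chain forces $N\mid w_j$ --- is false as stated, and the paper itself contains the counterexample. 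Take $\phi(x)=xy$, $\phi(y)=yx$ with weights $p_1=1$, $p_2=2$ and $u=xyy$, as in the remark following the Proposition. Here $w$ factors into the weight-$3$ blocks $xy$ and $yx$, so there is an infinite $(3,K)$-chain for every $K$ starting at position $0$, and its interval contains the prefix $ux_1=xyyx$; yet $w_0=[1,2]\cdot[1,2]^T=5$ is not divisible by $3$. This occurrence of $u$ sits at the very start of $w$, perfectly aligned with the level-$0$ block structure, so recognizability \emph{\`a la} Moss\'e offers no obstruction: recognizability constrains letter-positions relative to the $\phi^j$-block decomposition, whereas the chain's breakpoints are cut out by cumulative weights, and these two structures have no a priori relation. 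Your assertion that ``the left and right endpoints of any $\phi^j(u)$-occurrence share the same $W$-residue modulo $N$'' is exactly the statement $N\mid w_j$ you are trying to prove, so the lemma is circular as well as false without further hypotheses. (The example violates condition (3), so it does not contradict the Proposition, but your lemma and its proposed proof make no use of (3) at this point, which is precisely the problem: the determinant hypothesis must enter the combinatorial core, not just the final periodicity bookkeeping.)

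For comparison, the paper bridges this gap quite differently. It first upgrades ``arbitrarily long progressions of difference $D$'' to an \emph{infinite} progression $t+D\N\subseteq\mathcal{S}$ (using that $\phi^n(x_1)$ is a prefix of $w$), takes $D$ \emph{minimal} with this property, and forms the full set $T=\{i_1<\dots<i_q\}$ of residues supporting infinite progressions. Because both $\phi^n(u)\phi^n(x_1)$ and $\phi^n(x_1)$ are prefixes of $w$, the set $T$ must be carried into itself by translation by $-\ell_n$ modulo $D$; Lemma~\ref{lem: progression} (whose hypothesis requires the minimality of $D$) says the cyclic gap-sequence of $T$ admits no non-trivial cyclic symmetry, which forces $\ell_n\equiv 0\pmod D$ for all large $n$. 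None of these ingredients --- the infinite progression, the minimal difference, the residue set $T$, the cyclic-rigidity lemma --- appears in your proposal, and without them the step from ``long chains exist'' to ``$N\mid w_j$'' does not go through.
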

We remark that in the statement of Proposition \ref{prop: grnilpotent}, item (1) always holds since we are assuming that $\phi$ is a primitive morphism; that is, there is always a decomposition of this form and one can use whichever decomposition of this form that one wants.  We only use this first condition to define $u$, which is used in Condition (2).  Condition (2) is fairly relaxed and tends to hold in practice. On the other hand, to satisfy condition (3), one generally has to exercise some care when defining the morphism to get the determinant condition.  We remark, however, that one requires something like condition (3) in general to produce graded nilpotent algebras. For example, if one takes the primitive morphism $\phi: \{x,y\}^*\to \{x,y\}^*$ which sends $x$ to $xy$ and $y$ to $yx$ then one obtains a right-infinite word $w=xyyxyxxy\cdots$. If one gives $x$ weight $p_1=1$ and $y$ weight $p_2=2$ and takes $u=xyy$, then conditions (1) and (2) hold. The determinant of $M(\phi)=0$, so condition (3) does not hold.  Notice that by definition of $\phi$, $w$ is a right-infinite word over the alphabet $\{xy,yx\}$ and since both of these words have degree three, the positive part of $A_w$ is not graded nilpotent.

We remark that Proposition \ref{prop: grnilpotent}, while technical, is easy to verify in practice.  We give a few quick examples to illustrate how one would use it in practice.
\begin{exam} Let $\Sigma=\{x,y\}$ and let $\phi:\Sigma^*\to \Sigma^*$ denote the morphism given by

$\phi(x)=xy$ and $\phi(y)=y^2x$.  Then if $w=x\phi(y)\phi^2(y)\phi^3(y)\cdots = xyyxyyxyyxxy\cdots$, then the positive part of $A_w$ is graded nilpotent when $x$ is given degree one and $y$ is given degree two.

\label{exam: xy}

\end{exam}

To see why this is the case observe that $\phi$ is primitive since both $x$ and $y$ occur in $\phi(x)$ and $\phi(y)$.  We can write $w=uxw'$ where $u=xyy$ and $w'$ is right-infinite.  We have $v:=\theta(u)=[1, 2]^T$.  Since $x$ and $y$ are being given degrees one and two respectively we have $p_1=1$ and $p_2=2$.  Finally,

$$ M(\phi) = \left( \begin{matrix}  1 & 1 \\ 1 & 2\end{matrix}\right),$$ which has determinant $1$.

We see $[p_1,p_2]M(\phi)^0 v = 5$ and $[p_1,p_2]M(\phi) v =13$ and since $5$ and $13$ have greatest common divisor one, Proposition \ref{prop: grnilpotent} gives that the positive part of $A_w$ is graded nilpotent.

\begin{exam} Let $\Sigma=\{x,y,z\}$ and let $\phi:\Sigma^*\to \Sigma^*$ denote the morphism given by

$\phi(x)=xyz$ and $\phi(y)=zx$, and $\phi(z)=yz$.  Then if $w=x\phi(yz)\phi^2(yz)\phi^3(yz)\cdots = xzxyzyzxyzzxyz\cdots$, then the positive part of $A_w$ is graded nilpotent when $x$ is given degree $1$, $y$ is given degree $2$, and $z$ is given degree $3$.

\label{exam: xyz}

\end{exam}

First, $\phi$ is primitive since $\phi(x)$ contains all letters, $\phi^2(y) = yzxyz$ contains all letters, and $\phi^3(z)$ contains all letters. We can write $w=uxw'$ where $u=xz$ and $w'$ is right-infinite.  Then $v:=\theta(u)=[1, 0,1]^T$.  Our vector of weights is given by $[p_1,p_2,p_3]=[1,2,3]$.

We also have $$ M(\phi) = \left( \begin{matrix}  1 & 1 & 0 \\ 1 & 0 & 1 \\ 1 & 1 & 1\end{matrix}\right), $$ which has determinant $-1$; moreover, we see $[p_1,p_2,p_3]M(\phi)^0 v = 4$ and $[p_1,p_2]M(\phi) v =9$ and since $4$ and $9$ have greatest common divisor one, Proposition \ref{prop: grnilpotent} gives that the positive part of $A_w$ is graded nilpotent.

Before we give the proof of Proposition \ref{prop: grnilpotent}, we require a simple combinatorial lemma.
\begin{lem}
\label{lem: progression}
Let $D>1$ be a positive integer, let $0\le i_1<i_2<\cdots <i_q<D$ be a sequence of positive integers and let $\mathcal{S}$ be the union of the arithmetic progressions $\{a+Dn\colon n\ge 0\}$ with $a\in \{i_1,i_2,\ldots ,i_q\}$.  Suppose that $\mathcal{S}$ does not contain any infinite arithmetic progressions of difference strictly less than $D$.  Then the sequence $$(i_2-i_1,i_3-i_2,i_4-i_3,\ldots ,i_q-i_{q-1},i_1+D-i_q)$$ is not fixed by a non-trivial cyclic permutation of itself.
\end{lem}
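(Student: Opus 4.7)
The plan is to argue by contradiction, translating the hypothesis of invariance under a non-trivial cyclic permutation into the existence of a short-difference infinite arithmetic progression inside $\mathcal{S}$, which contradicts the standing hypothesis.

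First I would set up notation. Define the gap sequence $g_j = i_{j+1} - i_j$ for $1 \le j \le q-1$, and $g_q = i_1 + D - i_q$, so that the tuple in the statement is exactly $(g_1,\ldots,g_q)$. Listing the elements of $\mathcal{S}$ in increasing order as $i_1 < i_2 < \cdots < i_q < i_1 + D < i_2 + D < \cdots$ (i.e., extending by $i_{j+q} := i_j + D$), the successive differences are exactly the cyclic repetition $g_1, g_2, \ldots, g_q, g_1, g_2, \ldots$. Note that $g_1 + g_2 + \cdots + g_q = D$.

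Now suppose, toward a contradiction, that $(g_1,\ldots,g_q)$ is fixed by some non-trivial cyclic shift. The set of shifts fixing the tuple is a subgroup of $\mathbb{Z}/q\mathbb{Z}$, so there is a minimal period $t$ dividing $q$, and the non-triviality assumption forces $1 \le t < q$, a \emph{proper} divisor. Let $d = g_1 + g_2 + \cdots + g_t$. Because the gap sequence has period $t$ and its full sum over one period of length $q$ is $D$, we get $D = (q/t)\,d$, hence
\[ d \;=\; \frac{Dt}{q} \;<\; D. \]
The periodicity of the gaps translates, via the extended enumeration, into $i_{j+t} - i_j = d$ for every $j \ge 1$. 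In particular, the elements $i_1, \, i_1+d, \, i_1+2d, \, \ldots$ all lie in $\mathcal{S}$, forming an infinite arithmetic progression of common difference $d < D$. This contradicts the hypothesis on $\mathcal{S}$, completing the proof.

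The only delicate point is the bookkeeping that converts periodicity of the gap sequence into a genuine infinite arithmetic progression within $\mathcal{S}$; once one adopts the extended indexing $i_{j+q} = i_j + D$, which is forced by how $\mathcal{S}$ decomposes modulo $D$, the chain $i_1 \to i_{1+t} \to i_{1+2t} \to \cdots$ produces the required AP automatically. I expect no other obstacles: the argument is purely combinatorial and uses nothing beyond the observation that the total gap around one period equals $D$.
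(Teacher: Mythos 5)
Your proof is correct and follows essentially the same route as the paper: translate cyclic invariance of the gap sequence into periodicity of the successive differences of $\mathcal{S}$, and read off an infinite arithmetic progression of difference $d<D$ starting at $i_1$, contradicting the hypothesis. The only cosmetic difference is that you pass to the minimal period $t$ dividing $q$, whereas the paper works directly with the given shift $m$ and observes $b_1+\cdots+b_m=i_{m+1}-i_1<D$; both yield the same contradiction.
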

\begin{proof}
Let $b_j:=i_{j+1}-i_{j}$ for $j=1,\ldots ,q$, where we take $i_{q+1}=i_1+D$, and let 
$$
{\bf b}:=(b_1,b_2,\dots, b_q)\in \N^q.
$$
We let $\sigma$ be the $q$-cycle $(1,2,3,\dots,q)\in S_q$ and for $\pi\in S_q$ we let  $\pi({\bf b})$ denote $(b_{\pi(1)}, b_{\pi(2)},\dots, b_{\pi(q)})$.

Suppose that a non-trivial cyclic permutation of ${\bf b}$ is equal to ${\bf b}$.  Then $\sigma^m({\bf b})={\bf b}$ for some $m\in \{1,\ldots ,q-1\}$. Let $\pi=\sigma^m$ so that we then have that $b_i=b_{\pi(i)}$. By definition, $\mathcal{S}$ is equal to the set
$$
\{i_1,i_2,\dots, i_q, i_1+D,i_2+D,\dots, i_q+D, i_1+2D,\dots\}
$$ 
Moreover, the differences between successive terms of this sequence are given by the sequence
$$
(b_1,b_2,\dots, b_q, b_1,b_2,\dots, b_q,b_1,\dots).
$$
By repeatedly applying the identity $\sigma^m({\bf b})={\bf b}$, we have that
$$
(b_1,b_2,\dots, b_q, b_1,b_2,\dots, b_q,b_1,\dots)=(b_1,b_2,\dots, b_{m},b_1,b_2,\dots, b_{m},b_1,\dots).
$$
Therefore $\mathcal{S}$ contains an infinite arithmetic progression of the form $i_1+(b_1+\dots +b_{m})\N$, and $b_1+\cdots +b_m=i_{m+1}-i_1<D$, contradicting our assumption on $\mathcal{S}$.  The result follows.
\end{proof}

\begin{proof}[Proof of Proposition \ref{prop: grnilpotent}]
Write $w=a_1a_2a_3\cdots$, where $a_i\in \Sigma$ is the $i$-th letter of $w$. 
We let $\mathcal{S}=\{s_0,s_1,s_2,\ldots\}$ denote the subset of $\mathbb{N}_0$ in which $s_0=0$ and for $i\ge 1$, $s_i=s_{i-1}+{\rm deg}(a_i)$.  That is,
$s_i = \sum_{j=0}^i {\rm deg}(a_i)$.  Now if the positive part of $A_w$ is not graded nilpotent then there must exist some fixed $D>0$ such that there are arbitrarily long subwords of $w$ of the form $u_1u_2\cdots u_p$ with each $u_i$ having degree $D$.  This then gives that there exist arbitrarily long arithmetic progressions of difference $D$ in $\mathcal{S}$.  In particular, $\mathcal{S}$ must contain arithmetic progressions of the form $a,a+D,a+2D,\ldots ,a+(r-1)D$ for every $r\ge 1$.  

Then for every $r\ge 2$ there exists a subword $u_1u_2\cdots u_{r}$ of $w$ such that each $u_i$ is a subword of $w$ of degree $D$. Since $\phi$ is a primitive morphism, there exists some fixed $C>0$ such that whenever $x_1\in \Sigma$ occurs in $w$, its next occurrence in $w$ is at most $C$ positions later (see Allouche and Shallit \cite[Theorem 10.9.5]{AS}, which shows that for every finite subword $z$ of $w$, one has a constant $C=C(z)$ such that whenever $z$ occurs in $w$ its next occurrence in $w$ is at most $C$ positions later).  In particular for every $n\ge 1$, we see that if we take $r$ sufficiently large there is a subword of $u_1u_2\cdots u_r$ of the form $\phi^n(x_1)$.  

It follows that 
there exist natural numbers $i$ and $j$ with $i>1$ and $i+j<r$ such that $\phi^n(x_1)=v u_i\cdots u_{i+j} v'$, where $v$ is a proper (possibly empty) suffix of $u_{i-1}$ and $v'$ is a proper (possibly empty) prefix of $u_{i+j+1}$. Since $\phi^{n}(x_1)$ is a prefix of $w$ and it begins $vu_iu_{i+1}\cdots$, we see that $\mathcal{S}$ must contain a progression of the form $t,t+D,t+2D,\ldots ,t+sD$ with $t<D$ and such that $i+(s+1)D$ is strictly greater than the length of $\phi^{n}(x_1)$.  By assumption, $\mathcal{S}$ contains arbitrarily long arithmetic progressions of length $D$ and so there are infinitely many natural numbers $n$ for which 
$\phi^n(x_1)$ contains a progression of the form $t,t+D,t+2D,\cdots t+sD$ for some $t<D$ and $t+(s+1)D$ strictly greater than the length of $\phi^n(x_1)$.  Thus there is some fixed $t<D$ for which there are infinitely many natural numbers $n$ with this property.  Hence $\mathcal{S}$ contains arbitrarily long arithmetic progressions of the form $t,t+D,t+2D,\ldots ,t+rD$ for some fixed $t<D$, and so $\mathcal{S}$ contains an infinite arithmetic progression $t,t+D,t+2D,\ldots $.  

We now suppose towards a contradiction that the positive part of $A_w$ is not graded nilpotent and pick $D\in \N$ minimal with respect to having the property that $\mathcal{S}$ has an infinite arithmetic progression of the form $a+D\N$, where $a< D$.  Then $D>1$ since by condition (2) some $x_i$ has degree $p_i>1$ and since our morphism is primitive all letters occur infinitely often.  Moreover, $\mathcal{S}$ cannot contain any infinite arithmetic progressions with difference $<D$, because if it has such a progression then by the argument we just gave we see that it has a progression of the form $a+e\N$, where $a< e<D$, which contradicts the minimality of our choice of $D$.

We define
$$
T:=\lbrace a: 0\leq a< D, \{a+Dn\colon n\ge 0\}\subseteq  \mathcal{S}\rbrace.
$$
We write $T=\lbrace i_1, i_2, \dots, i_{q}\rbrace,$ where $0\le i_1<i_2<\dots<i_{q}<D$. Notice that there exists a positive integer $N$ such that if $j\in \{0,\ldots ,D-1\}\setminus T$ then there is some $m$, depending upon $j$, such that $j+mD\le N$ and $j+mD\not\in \mathcal{S}$.  Then by Lemma \ref{lem: progression}, since $\mathcal{S}$ does not contain any arithmetic progressions of length $<D$ we see that no non-trivial cyclic permutation of $$(i_2-i_1,i_3-i_2,\ldots ,i_q-i_{q-1},i_1+D-i_q)$$ can be equal to itself.

Now by (1) we have $w=ux_1 w'$ with $w'$ right-infinite and $u$ a word beginning with $x_1$. Let $v=[c_1,\ldots ,c_d]^T$ denote the column vector whose $j$-th coordinate is the number of occurrences of $x_j$ in $u$ (that is, $v=\theta(u)$) and we let $p=[p_1,\ldots ,p_d]$ denote the row vector whose $j$-th coordinate is the degree of $x_j$.  Then since $w$ is a fixed point of $\phi$, for each $n$ we have $\phi^n(u)\phi^n(x_1)$ is a prefix of $w$.  Recall that the degree of $\phi^n(u)$ is given by $pA(\phi)^n v$ by Equation (\ref{eq: theta}).  We claim that $\phi^n(u)$ must have degree equal to a multiple of $D$ for every $n$.  To see this, let $\ell_n$ denote the weight of $\phi^n(u)$ and let $\ell'_n$ denote the weight of $\phi^n(x_1)$ for each $n\ge 0$.  


Then given a positive integer $n> N$, there exists a unique $s\in \{1,\ldots ,q\}$ and a unique $r\ge 0$ such that $i_s+Dr$ is the largest positive integer in the set $\{i_j+D \ell \colon 1\le j\le q, \ell\ge 0\}$ that is less than or equal to $\ell_n$.  Since $\phi^n(u)\phi^{n}(x_1)$ is a prefix of $w$, we see that the part of the set
$$\{i_1,i_2,\ldots ,i_q, i_1+D,i_2+D\ldots ,i_s+Dr,\ell_n+i_1, \ell_n+i_2,\ldots , \ell_n+i_q,\ell_n+i_1+D,\ldots \}$$
in 
$[0,\ell_{n}+\ell'_{n}]$ is entirely contained in $\mathcal{S}$.

For $j=1,\ldots ,q$, define $i_{s+j}$ to be $i_{s+j-q}+D$ if $s+j>q$.  Then by definition of $T$, we have that
$$\{i_{s+1}+Dr, i_{s+2}+Dr,\ldots , i_{s+q}+Dr,i_{s+1}+D(r+1),\ldots \}\cap [0,\ell_{n}+\ell_{n}']\subseteq \mathcal{S}$$ and so subtracting $\ell_n$, the weight of $\phi^n(u)$, and using the fact that the prefix $\phi^n(u)$ in $w$ is then followed by $\phi^{n}(x_1)$ in $w$, we see that 
$$\{i_{s+1}+Dr-\ell_n,i_{s+2}+Dr-\ell_n,\ldots\}\cap [0,\ell_{n}']\subseteq \mathcal{S}.$$
Since $n-1\ge N$ and $i_{s+j}+Dr-\ell_n\in \{0,\ldots ,D-1\}$ for $j=0,\ldots ,q$, we see from our choice of $N$ that we must have $i_{s+j}+Dr-\ell_n = i_j$ for $j=1,\ldots ,q$.  
Notice also that $(i_{s+j+1}+Dr-\ell_n)-(i_{s+j}+Dr-\ell_n)=i_{s+j+1}-i_{s+j}$ for $j=1,\ldots ,q$, where we take $i_{s+j}=i_{s+j-q}$ if $s+j>q$.  Since 
$i_{s+j}+Dr-\ell_n = i_j$, we see that $i_{j+1}-i_{j}=i_{s+j+1}-i_{s+j}$ for $j=1,\ldots ,q$.  Since no non-trivial cyclic permutation of $$(i_2-i_1,i_3-i_2,\ldots ,i_q-i_{q-1},i_1+D-i_q)$$ can be equal to itself, we have that $s=q$ and so taking $j=1$ in the equation $i_{s+j}+Dr-\ell_n = i_j$ gives
$i_1+Dr+D-\ell_n = i_1$.  In particular, $\ell_n\equiv 0~(\bmod~D)$ for all $n> N$.  

Since $\ell_n = p A^n v$ and since $D>1$, we see that if the positive part of $A_w$ fails to be graded nilpotent then for all $n>N$ we must have that $D$ divides $\ell_n$.   In particular, since $D>1$ there is some prime number $t$ such that $t|\ell_n$ for all sufficiently large $n$. By condition (2), not all $\ell_i$ are divisible by $t$, and so we see that there is some largest natural number $m$ such that $t$ does not divide $\ell_m$.  But now by Condition (3), the Cayley-Hamilton theorem gives a relation of the form $A^d + c_{d-1}A^{d-1}+\cdots + c_0I =0$, where $c_0=\pm \det(A)=\pm 1$.  So if we multiply this relation by $A^{m}$, we see that $A^m$ can be expressed as an integer linear combination of $A^{m+1},\ldots ,A^{m+d}$ and consequently $\ell_m$ can be expressed as an integer linear combination of $\ell_{m+1},\ldots ,\ell_{m+d}$.  But this is a contradiction, since $t|\ell_{m+j}$ for all $j>0$ and $t$ does not divide $\ell_m$.  The result follows.
\end{proof}

We remark that Proposition \ref{prop: grnilpotent} gives a fairly general criterion for guaranteeing when an iterative algebra is graded nilpotent with respect to a given grading. It does not, however, answer the general question of which iterative algebras have a grading that ensures one obtains a graded nilpotent ring. We thus ask the following question.
\begin{ques} Let $k$ be a field and suppose that $A_w=k\{x_1,\ldots ,x_d\}/I$ is an iterative algebra associated to the endomorphism $\phi: \{x_1,\ldots ,x_d\}^*\to \{x_1,\ldots ,x_d\}^*$. Can one give a concrete characterization of the morphisms $\phi$ that are prolongable on $x_1$ and have the property that there exist $p_1,\ldots ,p_d>1$ such that when $x_i$ is given degree $p_i$ the algebra $A_w$ is locally nilpotent?
\end{ques}
Since this question could potentially be of interest to people whose chief research area is combinatorics of words, we rephrase the question in this language.  Here, one wishes to characterize the morphisms of finite alphabets $\{x_1,\ldots ,x_d\}$ that yield pure morphic right-infinite words $w$ with the property that there are positive integer weights $p_1,p_2,\ldots ,p_d$ such that the sequence $${\rm wt}(w_1),{\rm wt}(w_2),{\rm wt}(w_3),\ldots$$ does not contain arbitrarily long arithmetic progressions of difference $D$ for any $D>1$, where $w_i$ is the first $i$ letters of $w$ and if $v$ is a finite word then ${\rm wt}(v)$ is the sum of the weights of the letters of $v$.

We would like to remark that in \cite{AgataGrNil} it is proved (Theorem 1.1) that for a graded algebra $R$ generated in degree $1$, being Jacobson radical is equivalent to having the following property: for all $n,d\in \mathbb{N}$, the space $M_n(R_d)$ of matrices of size $n$ with entries belong homogeneous elements of degree $d$ is nil. It is mentioned there that it is unknown if this equivalence is true for graded algebras not necessarily generated in degree $1$.

Our remark is that every graded nilpotent algebra satisfies the condition that $M_n(R_d)$ is nil, but such algebras need not be Jacobson radical (as was already shown in \cite{Iterative}).

\section{Graded Nilpotent Algebras with Free Subalgebras}

The main objective of this section is to prove Theorem \ref{Graded_Nil_Free_Subalgebra}.  To better describe our construction, we give some notation. Given a finite alphabet $\Sigma$ and a right-infinite word $v=a_1a_2a_3\cdots$, with $a_1,a_2,\ldots \in \Sigma$, we define $v[i,j]:=a_i\cdots a_j$.

In order to give our construction, we require a simple lemma.
\begin{lem} \label{sequence}
There exists an increasing sequence of nonnegative integers $0=n_0<n_1<n_2<\cdots$ with the property that for every finite sequence of positive integers $(a_1,\dots,a_s)$ there exists some natural number $m$ such that $a_1=n_{m+1}-n_m,\ldots,a_s=n_{m+s}-n_{m+s-1}$.
\end{lem}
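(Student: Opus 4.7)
The plan is to build the sequence $(n_i)$ directly from its difference sequence, using the fact that the set of all finite sequences of positive integers is countable.

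First, enumerate the countable set of all finite sequences of positive integers as $\mathbf{b}^{(1)},\mathbf{b}^{(2)},\mathbf{b}^{(3)},\ldots$, where each $\mathbf{b}^{(j)}=(b^{(j)}_1,\ldots,b^{(j)}_{s_j})$. Concatenate these tuples in order to form a single right-infinite sequence of positive integers
$$
d_1,d_2,d_3,\ldots \;=\; b^{(1)}_1,\ldots,b^{(1)}_{s_1},\, b^{(2)}_1,\ldots,b^{(2)}_{s_2},\, b^{(3)}_1,\ldots.
$$
Now define $n_0:=0$ and, for $k\ge 1$,
$$
n_k \;:=\; d_1+d_2+\cdots+d_k.
$$
Since each $d_i$ is a positive integer, the sequence $(n_k)_{k\ge 0}$ is a strictly increasing sequence of nonnegative integers, and by construction
$$
n_k-n_{k-1} \;=\; d_k \qquad \text{for every }k\ge 1.
$$

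To verify the universality property, let $(a_1,\ldots,a_s)$ be an arbitrary finite sequence of positive integers. By the enumeration, there is some index $j$ with $(a_1,\ldots,a_s)=\mathbf{b}^{(j)}$. Letting $m:=s_1+s_2+\cdots+s_{j-1}$, the block $\mathbf{b}^{(j)}$ occupies positions $m+1,\ldots,m+s$ of the concatenated sequence, so $d_{m+i}=a_i$ for $i=1,\ldots,s$. Therefore $n_{m+i}-n_{m+i-1}=a_i$ for each $i=1,\ldots,s$, as required.

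There is no real obstacle here: the argument is a standard diagonal-enumeration trick, and the only subtlety is making sure the enumeration is well-defined, which is immediate from the countability of $\bigcup_{s\ge 1}\N^s$.
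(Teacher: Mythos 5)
Your proof is correct and follows essentially the same route as the paper: enumerate the countably many finite sequences of positive integers, concatenate them into one infinite difference sequence, and take partial sums starting from $n_0=0$. The explicit verification via $m=s_1+\cdots+s_{j-1}$ is a nice touch but does not change the argument.
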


\begin{proof} Given two finite sequences ${\bf a}=(a_1,\ldots ,a_s)$ and ${\bf b}=(b_1,\ldots ,b_t)$ of positive integers, we let 
${\bf a}\star {\bf b} = (a_1,\ldots ,a_s,b_1,\ldots ,b_t)$.  We note that $\star$ is associative and given a countable collection of finite sequences of positive integers, ${\bf a}_1, {\bf a}_2, \ldots$, we can form an infinite sequence by taking the limit of ${\bf a}_1\star {\bf a}_2 \star \cdots {\bf a}_n$ as $n\to \infty$, which we denote by ${\bf a}_1\star {\bf a}_2 \star {\bf a}_3 \star \cdots$.

Since there are only countably many finite sequences of positive integers, we may enumerate them, ${\bf a}_1, {\bf a}_2, \ldots$.  We then let ${\bf b}$ denote the infinite sequence formed by concatenation of them; that is $${\bf b}= {\bf a}_1\star {\bf a}_2 \star \cdots.$$ We now write ${\bf b}=(b_1,b_2,b_3,\ldots )$, where each $b_i$ is a positive integer, and we set $n_0=0$ and for $i>0$ we let $n_i=b_1+\cdots+b_i$. Then the sequence $n_0<n_1<n_2<\cdots$ has the desired property since each finite sequence ${\bf a}_i$ appears in the sequence of differences, by construction.
\end{proof}

We now give our main construction of this paper. By Example \ref{exam: xy} we have a right-infinite word \begin{equation}
\label{eq: ww}
w=xyyyx\cdots
\end{equation} over the alphabet $x,y$ and a monomial algebra $A_w$ with the property that, when $x$ is given degree one and $y$ is given degree two, it is graded nilpotent.  We let $w'=x'y'y'y'x'\cdots $ be the right-infinite word over the alphabet $x',y'$ obtained by making the substitutions $x\mapsto x'$ and $y\mapsto y'$ in $w$.  



We now let $0=n_0<n_1<\cdots $ be an increasing sequence of nonnegative integers satisfying the property from the statement of Lemma \ref{sequence} and we make a right-infinite word $\tilde{w}$ over the alphabet $\{x,y,x',y'\}$ defined by
\begin{equation} \label{eq: tilde}
\tilde{w}=w[1,n_1]w'[n_1+1,n_2]w[n_2+1,n_3]w'[n_3+1,n_4]w[n_4+1,n_5]\cdots.\end{equation}

We are now ready to give our construction.  We let
\begin{equation} A:=A_{\tilde{w}}=k\langle x,y,x',y'\rangle/I,\label{eq: monom}
\end{equation}
where $I$ is the ideal generated by all monomials which do not appear as a subword of $\tilde{w}$. 
\begin{thm} Let $A$ be as in Equation (\ref{eq: monom}) and endow $A$ with a grading by declaring that $\deg(x)=\deg(x')=1$ and $\deg(y)=\deg(y')=2$.  Then the positive part of $A$ is graded nilpotent and the elements $x+y$ and $x'+y'$ generate a free subalgebra of $A$.
\label{thm: construction}
\end{thm}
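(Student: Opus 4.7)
The plan is to prove the two claims of the theorem separately: graded nilpotency of the positive part of $A$, and freeness of the subalgebra generated by $x+y$ and $x'+y'$.

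For graded nilpotency, the key observation is that the substitution $x \leftrightarrow x'$, $y \leftrightarrow y'$ preserves degrees, so the $i$-th letter of $\tilde{w}$ has the same degree as the $i$-th letter of $w$ regardless of which block of $\tilde{w}$ it inhabits. Consequently the partial weight-sum set $\tilde{\mathcal{S}}$ for $\tilde{w}$ coincides with the analogous set $\mathcal{S}$ for $w$. As in the proof of Proposition \ref{prop: grnilpotent}, in a monomial algebra defined by a right-infinite word $v$, the identity $A_d^N=0$ is equivalent to the partial weight-sum set of $v$ containing no $(N{+}1)$-term arithmetic progression of common difference $d$. Since $A_w$ is graded nilpotent by Example \ref{exam: xy}, for each $d\ge 1$ there is some $N_d$ such that $\mathcal{S}$, and hence $\tilde{\mathcal{S}}$, contains no arithmetic progression of difference $d$ of length exceeding $N_d$; thus $A_d^{N_d}=0$ in $A$.

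For freeness, write a nonzero polynomial $f\in k\langle X,X'\rangle$ as $\sum_\alpha c_\alpha M_\alpha$, with distinct monomials $M_\alpha = Y_1^{(\alpha)}\cdots Y_{\ell_\alpha}^{(\alpha)} \in \{X,X'\}^*$ and scalars $c_\alpha \ne 0$. Under $X \mapsto x+y$, $X' \mapsto x'+y'$, each $M_\alpha$ expands (with unit coefficients) to the sum of all $2^{\ell_\alpha}$ monomials $z_1 \cdots z_{\ell_\alpha}$ with $z_i \in \{x, y\}$ (respectively $\{x', y'\}$) when $Y_i^{(\alpha)} = X$ (respectively $X'$). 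Since the primed/unprimed profile of any such $z$ recovers $M_\alpha$, the expansions for distinct $M_\alpha$'s involve disjoint monomials and no cancellation can occur across different $\alpha$. It therefore suffices to exhibit, for every $M_\alpha$, at least one monomial in its expansion that is a subword of $\tilde{w}$, hence survives modulo $I$.

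To produce such a monomial for a given $M = Y_1 \cdots Y_\ell$, decompose $M$ into maximal runs $R_1 \cdots R_r$, each a power of $X$ or of $X'$. Because the blocks of $\tilde{w}$ alternate between unprimed ($w$-blocks) and primed ($w'$-blocks), a subword of $\tilde{w}$ with the primed profile of $M$ corresponds to a choice of consecutive blocks $B_{m+1},\ldots,B_{m+r}$ of $\tilde{w}$ with $R_1$ fitting as a suffix of $B_{m+1}$, $R_r$ fitting as a prefix of $B_{m+r}$, the interior runs $R_2,\ldots,R_{r-1}$ matching $B_{m+2},\ldots,B_{m+r-1}$ in length exactly, and the parity of $m+1$ matching the type of $R_1$. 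Lemma \ref{sequence} supplies the length pattern $(|R_1|,\ldots,|R_r|)$ as consecutive block-length differences, securing the length requirements. The chief obstacle is controlling the parity of the starting index $m+1$, which Lemma \ref{sequence} does not directly address; I would remove it by a minor strengthening of the lemma, enumerating each finite sequence together with its $1$-padded variants (namely the sequence itself, $(1)$ prepended, and $(1,1)$ prepended), which forces two of the three occurrences to lie at indices of opposite parity and so guarantees every finite sequence appears as consecutive differences starting at blocks of both parities. With the correct block structure in hand, the corresponding subword $\tilde{w}[t+1, t+\ell]$ has the required primed profile and provides the desired surviving monomial in the expansion of $M(x+y, x'+y')$.
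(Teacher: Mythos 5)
Your proof follows essentially the same route as the paper's. For graded nilpotency, your observation that priming preserves degrees letter by letter, so the weight-sum set of $\tilde{w}$ equals that of $w$, is the same fact the paper exploits via the projection $x,x'\mapsto x$, $y,y'\mapsto y$; both reduce the claim to Example \ref{exam: xy}. For freeness, you separate monomials by their primed/unprimed profile (so no cancellation occurs between the expansions of distinct monomials of $f$) and then use Lemma \ref{sequence} to realize the run-length pattern of a given monomial as a string of consecutive blocks of $\tilde{w}$; this is exactly the paper's argument, except that the paper first normalizes $f$ by multiplying by $X$ on the left and $Y$ on the right so that every monomial has the shape $X^{i_1}Y^{j_1}\cdots X^{i_r}Y^{j_r}$, whereas you handle arbitrary run patterns directly. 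You also correctly flag a point the paper passes over silently: Lemma \ref{sequence} produces the run-length pattern starting at \emph{some} block $m+1$, while one also needs the parity of $m+1$ to match the type (primed or unprimed) of the first run, since the blocks of $\tilde{w}$ alternate. The paper simply asserts that $m$ can be taken even.

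Your proposed repair of this parity issue, however, does not work as stated. Enumerating $\mathbf{a}$, $(1)\star\mathbf{a}$ and $(1,1)\star\mathbf{a}$ yields three starting indices at which $\mathbf{a}$ occurs as a block of consecutive differences, but nothing prevents all three from having the same parity; pigeonhole guarantees two indices of the \emph{same} parity among any three integers, not two of opposite parity. A correct fix in the same spirit: for each $\mathbf{a}=(a_1,\dots,a_t)$, make sure the enumeration contains the single sequence $\mathbf{a}\star(1,\dots,1)\star\mathbf{a}$ with $j$ ones inserted, where $j\in\{1,2\}$ is chosen so that $t+j$ is odd. Any occurrence of this sequence contains two occurrences of $\mathbf{a}$ whose starting indices differ by the odd number $t+j$ and hence have opposite parities, so every finite sequence occurs as consecutive differences starting at blocks of both parities. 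With this one repair (or any equivalent modification of the construction of the $n_i$), your argument is complete and matches the paper's.
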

\begin{proof}
We first claim that the positive part of $A$ is graded nilpotent.  To see this, let $d$ be a positive integer.  Since the homogeneous elements of degree $d$ in $A$ are spanned by a finite set of monomials, it is sufficient to show that if $u_1,\ldots ,u_m$ are monomials in $A$ of degree $d$ then there is some $N=N(m)$ such that $u_{i_1}u_{i_2}\cdots u_{i_N}=0$ for every $i_1,\ldots ,i_N\in \{1,\ldots ,m\}$.  If there is no such $N$ then there are arbitrarily long subwords of $\tilde{w}$ of the form $u_{j_1}u_{j_2}\cdots u_{j_p}$, where $\tilde{w}$ is as in Equation (\ref{eq: tilde}).  Now we have a semigroup morphism $\phi: \{x,y,x',y'\} \to \{x,y\}$ given by sending $x$ and $x'$ to $x$ and $y$ and $y'$ to $y$.  We can extend $\phi$ to right-infinite words and by construction we have $\phi(\tilde{w})$ is equal to $w$, where $w$ is as in Equation (\ref{eq: ww}).  Then $\phi(u_1),\ldots ,\phi(u_m)$ are words in $x$ and $y$ of degree $d$ and we have that $w$ contains arbitrarily long subwords of the form $\phi(u_{j_1})\phi(u_{j_2})\cdots \phi(u_{j_p})$, which contradicts the results from Example \ref{exam: xy}.  

We now show that $u:=x+y$ and $v:=x'+y'$ generate a free subalgebra of $A$.  To see this, suppose that there exists some nonzero relation $f(X,Y)\in k\langle X,Y\rangle$ for which $f(u,v)=0$.  By left-multiplying our relation by $X$ and right-multiplying by $Y$, we may assume that 
we can write $$f(X,Y) = \sum c_{i_1,j_1,\ldots ,i_n,j_n} X^{i_1} Y^{j_1} X^{i_2} \cdots Y^{j_n},$$ where the sum runs over all sequences of positive integers $(i_1,j_1,\ldots ,i_n,j_n)$ of even length and such that $c_{i_1,j_1,\ldots ,i_n,j_n}=0$ for all but finitely many sequences.


Pick a sequence $(i_1,j_1,\ldots ,i_n,j_n)$ with the property that
$c_{i_1,j_1,\ldots ,i_n,j_n}\neq 0$. 
Since $u$ involves only the letters $x$ and $y$ and $v$ involves only the letters $x'$ and $y'$, we see that $u^{i_1} v^{j_1} u^{i_2} \cdots v^{j_n}$ is spanned by words over $x,x',y,y'$ such that the first $i_1$ letters are from $\{x,y\}$, the next $j_1$ letters are from $\{x',y'\}$, and so on.  On the other hand, if $(i_1',j_1',\ldots ,i_m',j_m')$ is a different sequence then 
$u^{i_1'} v^{j_1'} \cdots u^{i_m'} v^{j_m'}$ does not involve any words of that form.  Since $A$ is a monomial algebra, we then see that if $u$ and $v$ fail to generate a free algebra then we must have a non-trivial relation of the form
$$u^{i_1} v^{j_1} u^{i_2} \cdots u^{i_r} v^{j_r}=0.$$

By construction we have
$$u^{i_1} v^{j_1} u^{i_2} \cdots v^{j_r}=\sum W_1 V_1 W_2 V_2 \cdots W_r V_r,$$ where the sum on the right-hand side ranges over all $(W_1,V_1,\ldots ,W_r,V_r)$ where, for each $p$, $W_p$ is a word on the alphabet $x,y$ of length $i_p$ and $V_p$ is a word on the alphabet $x',y'$ of length $j_p$.
Since $A$ is a monomial algebra we then see that every word $$W_1 V_1 W_2 V_2 \cdots W_r V_r$$ of this form must have zero image in $A$ and so by definition of $A$, we see that no word of this form can be a subword of $\tilde{w}$.  

But by construction we have some even number $m$ for which 
\begin{eqnarray*}
i_1=n_{m+1}-n_m, j_1=n_{m+2}-n_{m+1},i_2=n_{m+3}-n_{m+2},\ldots \\
 i_r=n_{m+2r-1}-n_{m+2r-2},j_r=n_{m+2r}-n_{m+2r-1},\end{eqnarray*} so the word $$w[n_m+1, n_m+i_1] w'[n_{m+1}+1, n_{m+1}+j_1]\cdots $$ appears as a subword of $\tilde{w}$ and has the form mentioned above. Therefore we see that $u$ and $v$ have no non-trivial relations and so the elements $u$ and $v$ generate a free subalgebra.
\end{proof}

We end this section by giving a quick example of a monomial algebra with a free subalgebra generated by two homogeneous elements of degree one (in particular it has exponential growth), but in which every monomial is nilpotent; in particular, it cannot contain a free subalgebra generated by monomials and so this shows that the implication $(b)\implies (c)$ need not hold in general, where ($a$), ($b$), ($c$) are the statements defined in the Introduction.

\begin{exam}
\label{exam: bnotc}
Let $k$ be a field and let $A=k\langle x,y,z,w\rangle/I$ where $I$ is the ideal generated by the cubes of all monomials of length $\ge 1$.  Then the cube of every non-trivial monomial has zero image in $A$ but the subalgebra of $A$ generated by $x+y$ and $z+w$ is free.
\end{exam}

\begin{proof}
Suppose, towards a contradiction, that we have a non-trivial relation between $u:=x+y$ and $v=z+w$.  
Then using the same argument that was employed in the proof of Theorem \ref{thm: construction}, we see that if $u$ and $v$ fail to generate a free algebra then we must have a non-trivial relation of the form
$$u^{i_1} v^{j_1} u^{i_2} \cdots u^{i_r} v^{j_r}=0$$ for some $r\ge 1$ and some positive integers $i_1,j_1,\ldots ,i_r,j_r$.

Notice that by construction we have
$$u^{i_1} v^{j_1} u^{i_2} \cdots v^{j_r}=\sum W_1 V_1 W_2 V_2 \cdots W_r V_r,$$ where the sum on the right-hand side ranges over all $(W_1,V_1,\ldots ,W_r,V_r)$ where, for each $p$, $W_p$ is a word on the alphabet $x,y$ of length $i_p$ and $V_p$ is a word on the alphabet $z,w$ of length $j_p$.

Since $A$ is a monomial algebra we then see that every word $$W_1 V_1 W_2 V_2 \cdots W_r V_r$$ of this form must have zero image in $A$ and so by definition of $A$, we see that every word of this form must contain the cube of a monomial as a subword.  

Now let $w$ be the right-infinite word that begins with $x$ that is the unique fixed point of the morphism $\phi:\{x,y\}^*\to \{x,y\}^*$ given by $x\mapsto xy$ and $y\mapsto yx$ ($w$ is often called the \emph{Thue-Morse word}).  Let $v$ be the right-infinite word over $\{z,w\}$ obtained by making the letter-by-letter substitutions $x\mapsto z$ and $y\mapsto w$.  Then neither $w$ nor $v$ contain the cubes of a non-trivial monomial as a subword (cf. Allouche and Shallit \cite[Theorem 1.8.1]{AS}).  In particular, the monomial $w[1,i_1]v[1,j_1]w[1,i_2]v[1,j_2]\cdots w[1,i_r]v[1,j_r]$ has nonzero image in $A$ and so by the above remarks $u^{i_1} v^{j_1} u^{i_2} \cdots v^{j_r}$ is nonzero and is of the form given above and so we see that $u$ and $v$ generate a free subalgebra of $A$.
\end{proof}

\section{Rowen's Example}

We make a brief remark about an example of Rowen \cite{Rowen}. Our reasons for doing so are threefold; first, Rowen's example is unknown by many in the ring theory community due to being published in a conference proceedings; second there is some relevance with the theme of graded nil algebras that are not nil, which is the underlying theme of this paper; finally, Rowen's example itself has an interesting connection to the iterative algebra construction, which we shall see.

\subsection{The construction} \label{Rowen's construction}

Recall the \textit{Thue-Morse sequence}, which is binary sequence

$$(m_1,m_2,m_3,\dots)=(1,0,0,1,0,1,1,0,0,1,1,0,1,0,0,1,\dots),$$ where $m_i$ is $1$ if the number of ones in the binary expansion of $i-1$ is even and is $0$ otherwise.  We leave it to the reader to prove that this sequence, when regarded as a right-infinite word over the alphabet $\{0,1\}$, is the same as the right-infinite word over $\{0,1\}$ beginning with $1$ that is a fixed point of the morphism $1\mapsto 10$ and $0\mapsto 01$. This shows there is some overlap with the construction of iterative algebras (see, in particular, Example \ref{exam: bnotc}, where the Thue-Morse word is used again).  Indeed the key property used in Rowen's construction is that the Thue-Morse word is cube free, which is used in Example \ref{exam: bnotc}.  

Let $k$ be a field and let $V=ke_1+ke_2+\cdots$ be a countably infinite-dimensional $k$-vector space.  We let $A$ denote the endomorphism ring of $V$, which we think of as being column-finite matrices with entries in $k$.  We let $e_{i,j}$ denote the element of $A$ which sends $e_{\ell}$ to $\delta_{j,\ell}e_i$ and we let $$a=\sum_{i=1}^{\infty} m_ie_{i,i+1}, \qquad b=\sum_{i=1}^{\infty} (1-m_i)e_{i,i+1}.$$

Rowen \cite[Example 1]{Rowen} proved that the subalgebra $S$ of $A$ generated by $a$ and $b$ has the property that $f(a,b)a$ and $f(a,b)b$ are nilpotent whenever $f(a,b)$ is a homogeneous polynomial in $a$ and $b$. In fact, this property follows from the following property of the Thue-Morse sequence: for every $u\geq 1$ there exists some $n_u\geq 1$ such that for every $i\geq 1$, we have that the set $\{m_{i},m_{i+u},\dots, m_{i+n_u u}\}$ contains both $0$ and $1$.

Now $V$ can be given a $\mathbb{Z}$-grading by declaring that $e_i$ has degree $-i$ for every $i\ge 1$.  Then we see that by definition, $a$ and $b$ send homogeneous elements of $V$ of degree $-i$ to elements of degree $-i+1$ and so this endows $S$ with an $\mathbb{N}$-grading, in which $a$ and $b$ have degree one. Moreover, Rowen's result shows that $Sa$ and $Sb$ are graded nil left ideals (and in fact, graded nilpotent).

Rowen's construction works over any base field.  In particular, if we now assume that $k$ is uncountable then the validity of the K\"othe problem for algebras over uncountable fields gives that if $Sa$ and $Sb$ are in fact nil left ideals (rather than simply graded nil left ideals) then $S_{>0}=Sa+Sb$ is nil, too. But $a+b=\sum_{i=1}^{\infty} e_{i,i+1}$, which is not a nilpotent endomorphism, so $Sa+Sb$ cannot be nil and so we see that at least one of $Sa$ or $Sb$ is graded nil but not nil.  Hence we have a graded nil algebra that is not nil. Although the rings $Sa$ and $Sb$ need not be finitely generated, it is immediate that if $c\in \{a,b\}$ is such that $Sc$ is graded nil and non-nil then there exists a finitely generated algebra $S_0\subseteq Sc$ that is graded nil algebra and that is not nil.

One may wonder what happens over countable base fields, where the definitions of `nil' and `Jacobson radical' do not coincide (for countably generated algebras over uncountable base fields they do). In fact, in the following subsection we show that $S$ is primitive.





We would now like to provide a historical remark. After observing that Rowen's example yields a graded nil non-Jacobson radical algebra, we were informed by Louis Rowen that during the conference in 1989 where the paper \cite{Rowen} was presented, Kaplansky pointed out to Rowen that his example in fact provides a graded nil but not nil algebra. We therefore wish to attribute this observation to Kaplansky.

In the next subsection we show that Rowen's example is in fact an iterative algebra, as defined in Section 2, namely a monomial algebra defined by the Thue-Morse sequence via forbidden words. As an application, we show that Rowen's example is a primitive algebra of quadratic growth.

Note that Rowen's example is not isomorphic to the algebra constructed in \cite{AgataGrNil}. Indeed, the example from \cite{AgataGrNil} is not monomial and has exponential growth, whereas Rowen's example is monomial of quadratic growth as we see in the next subsection.

\subsection{Monomial presentation and further properties} \label{TM}

Let $w$ be the right-infinite word over the alphabet $\{x,y\}$ that is the unique word whose first letter is $y$ and that is a fixed point of the morphism $x\mapsto xy$, $y\mapsto yx$.  Then $w$ is the Thue-Morse word and is identical to the infinite word obtained from the Thue-Morse sequence, in which we replace `0' by `$x$' and `1' by `$y$'. That is, $$w = yxxyxyyxxyyxyxxy\cdots.$$

Given an infinite word $v$ on some finite alphabet $\Sigma$, one can define the monomial algebra $A_v$ associated with it, as the monomial algebra obtained by taking the free algebra generated by $\Sigma$ and quotienting by the ideal generated by all monomials that do not appear as a subword of $v$. We call the algebra obtained in this manner from the word $w$ above the \emph{Thue-Morse iterative algebra}, and we shall denote this algebra by $A$.

Recall that an infinite word $w$ is said to be \textit{uniformly recurrent} if for every finite subword $u$ of $w$ there exists some constant $C=C_u>0$ such that whenever $u$ occurs in $w$, its next occurrence is at most $C$ positions later.






We are now ready to prove that Rowen's example $S$ is isomorphic to the Thue-Morse iterated algebra. For $\varepsilon$ in $\{0,1\}$, we use the notation $\varepsilon^m$ to denote the sequence $\varepsilon\cdots \varepsilon$ where there are $m$ occurrences of $\varepsilon$.

\begin{prop} \label{ScongA}
Let $k$ be a field and let $A$ be the Thue-Morse iterative algebra defined above and let $S$ be the algebra from Subsection \ref{Rowen's construction} (both over the field $k$). Then $A\cong S$ and $S$ is a prime just infinite algebra; that is, if $J$ is a nonzero ideal of $S$ then $S/J$ is finite-dimensional.
\end{prop}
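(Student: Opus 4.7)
The plan is to construct an explicit isomorphism $\bar\psi\colon A\to S$ by matching the generators $x,y$ of $A$ with $b,a$, and then to read off primality and the just-infinite property from combinatorial features of the Thue-Morse word $w$.

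First, I would define $\psi\colon k\langle x,y\rangle \to \operatorname{End}(V)$ by $\psi(x)=b$ and $\psi(y)=a$. Using the basic identities $ae_k = m_{k-1}e_{k-1}$ and $be_k = (1-m_{k-1})e_{k-1}$, an easy induction on length shows that for a monomial $u=u_1\cdots u_n$,
\begin{equation*}
\psi(u) \,=\, \sum_{i\ge 1} \mathbf{1}[\,w_i w_{i+1}\cdots w_{i+n-1}=u\,]\, e_{i,i+n},
\end{equation*}
where $w=w_1w_2\cdots$ is the Thue-Morse word over $\{x,y\}$ under the correspondence $m_j=1\leftrightarrow w_j=y$ and $m_j=0\leftrightarrow w_j=x$. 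In particular $\psi(u)\neq 0$ if and only if $u$ appears as a subword of $w$, so $\ker\psi$ contains the defining ideal of $A=A_w$. Moreover the matrices $\psi(u)$ for distinct subwords $u$ have pairwise disjoint matrix supports (at each position $i$ there is only one length-$n$ subword of $w$ starting at $i$), hence are linearly independent. Therefore $\psi$ descends to an injection $\bar\psi\colon A\hookrightarrow \operatorname{End}(V)$ whose image is exactly the subalgebra of $\operatorname{End}(V)$ generated by $a,b$, giving $A\cong S$.

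Next, for primality, I would use that $w$, as the fixed point of the primitive morphism $x\mapsto xy$, $y\mapsto yx$, is uniformly recurrent. Assume toward a contradiction that $fAg=0$ for nonzero $f,g\in A$, written in reduced form $f=\sum_i c_iu_i$ and $g=\sum_j d_jv_j$ over distinct subword monomials with nonzero coefficients. Pick $u_0$ in the support of $f$ and $v_0$ in the support of $g$ of maximal respective lengths. By uniform recurrence there is a finite word $z$ such that $u_0zv_0$ is a subword of $w$. In the expansion $fzg=\sum_{i,j}c_id_j\,u_izv_j$, any pair $(u_i,v_j)$ contributing to the coefficient of $u_0zv_0$ must satisfy $u_izv_j=u_0zv_0$, whence $|u_i|+|v_j|=|u_0|+|v_0|$; combined with maximality of $|u_0|,|v_0|$ this forces $u_i=u_0$ and $v_j=v_0$. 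So the coefficient of $u_0zv_0$ in $fzg$ is $c_{u_0}d_{v_0}\neq 0$, contradicting $fzg=0$.

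Finally, for the just-infinite property, let $J$ be a nonzero two-sided ideal of $A$. The main obstacle — and the step I expect to require the most care — is to produce a nonzero monomial in $J$. Once this is achieved the rest is straightforward: if $u\in J$ is a nonzero monomial then by uniform recurrence there exists $C_u$ such that every subword of $w$ of length at least $|u|+C_u$ contains $u$ as an internal subword and therefore lies in $AuA\subseteq J$, so $A/J$ is spanned by the finitely many subwords of $w$ of length less than $|u|+C_u$ and hence is finite-dimensional. For the monomial-existence step, I would take $0\neq f=\sum_{i=1}^r c_iu_i\in J$ with $r$ minimal in its reduced form and aim to show $r=1$. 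If $r\ge 2$, I would exploit aperiodicity of the Thue-Morse word (it is cube-free, hence not eventually periodic) to find subwords $p,q$ such that some of the translates $pu_iq$ occur in $w$ while others do not; then $pfq\in J$ is nonzero but of strictly smaller support, contradicting minimality. The existence of such separating contexts for distinct subwords of a uniformly recurrent aperiodic word is a standard consequence of the minimality and aperiodicity of the associated subshift, and is where the most delicate combinatorics is concentrated.
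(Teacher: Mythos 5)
Your argument for $A\cong S$ is correct and is a genuinely different (and more self-contained) route than the paper's: you prove injectivity directly, via the explicit formula $\psi(u)=\sum_i \mathbf{1}[w_i\cdots w_{i+n-1}=u]\,e_{i,i+n}$ and the observation that distinct subwords have disjoint matrix supports, whereas the paper only verifies surjectivity of $A\to S$ and then deduces injectivity from the just-infiniteness of $A$ (cited from Bell--Madill, \emph{Iterative algebras}, Theorem 4.1) together with $\dim_k S=\infty$. Your direct primality argument via uniform recurrence is also correct (the paper does not prove primality or just-infiniteness at all; it cites the same theorem of Bell--Madill for both).

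The genuine gap is in the just-infiniteness step, and it sits exactly where you flagged it. The claim that for distinct subwords $u_1,\dots,u_r$ of $w$ there exist contexts $p,q$ with $\{i: pu_iq \text{ a subword of } w\}$ a proper nonempty subset is the entire content of the hard direction, and it is \emph{not} ``a standard consequence of minimality and aperiodicity of the associated subshift.'' To see why more is needed: if two factors $u\ne v$ of the same length cannot be separated by any context, the nested-context/compactness argument only produces two points of the two-sided subshift that differ in finitely many coordinates; ruling out such pairs for the Thue--Morse subshift uses recognizability of the substitution (the $2$-block decomposition determines each letter from its partner once the cutting points are known), not merely minimality and aperiodicity. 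For factors of \emph{different} lengths even this reduction does not apply, and your minimal-support element $f$ may well mix lengths, so an additional reduction (or a separate argument) is required there. As written, the proof of the just-infinite property therefore rests on an unproved combinatorial lemma whose proof is comparable in difficulty to the cited result it is meant to replace; either prove the separating-context lemma for the Thue--Morse word (using recognizability, or the explicit description of its special factors), or do as the paper does and invoke \cite[Theorem 4.1]{Iterative} for primality and just-infiniteness, keeping your injectivity computation as the improvement over the paper's surjectivity-plus-just-infinite argument.
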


\begin{proof}
There is a surjective homomorphism $\varphi:k\{x,y\} \rightarrow S$ given by $x\mapsto b$ and $y\mapsto a$. Consider a word $v:=y^{i_1}x^{j_1}\cdots x^{j_{n-1}}y^{i_n}x^{j_n}$ over $\{x,y\}$, where $i_1,j_n\ge 0$ and $j_1,i_2,\dots,j_{n-1},i_n>0$. Suppose that this word $v$ does not appear as a subword of the Thue-Morse word $w$ above. We claim that $\varphi(v)=0$.

Let $\alpha = \sum_{r=1}^{n}i_r+\sum_{r=1}^{n-1}j_r$ and write $a^{i_1}b^{j_1}\cdots b^{j_{n-1}}a^{i_n}b^{j_n}$ as an infinite matrix. 
$$\sum_{t=1}^{\infty} c_t e_{t,t+\alpha+j_n},$$
where  \begin{eqnarray*}
c_t&=& m_t\cdots m_{t+i_1-1}\cdot(1-m_{t+i_1})\cdots(1-m_{t+i_1+j_1-1})\cdots\\
&~& \cdots (1-m_{t+\alpha})\cdots(1-m_{t+\alpha+j_n-1})\end{eqnarray*}
 for $t\ge 1$.  
We note that this is equal to the zero matrix, since by assumption $v$ does not occur as a subword of the Thue Morse word and so $1^{i_1}0^{j_1}\cdots 0^{j_{n-1}}1^{i_n}0^{j_n}$ cannot occur as a subsequence of the Thue-Morse sequence and thus each $c_t=0$.  Hence $\varphi(v)=0$ whenever $v$ does not occur as a subword of the Thue-Morse word and thus $\varphi$ in fact induces a surjective homomorphism from $A$ to $S$.

Now since $w$ is uniformly recurrent and both $x$ and $y$ occur at least twice in $w$, we have that $A$ is a prime and just infinite \cite[Theorem 4.1]{Iterative}; that is, $A/J$ is finite-dimensional whenever $J$ is a nonzero ideal. Since $S$ is infinite-dimensional, we see that $A$ must be isomorphic to $S$. In particular, $S$ is also just infinite.  
\end{proof}

As a consequence, we have the following result.  We recall that a finitely generated algebra $B$ over a field $k$ has \emph{quadratic growth} if there is a finite-dimensional subspace $V$ of $B$ that generates $B$ as an algebra and that contains $1$ and there are positive constants $C_0,C_1$ such that $$C_0 n^2\le {\rm dim}(V^n) \le C_1n^2$$ for all sufficiently large $n$.

\begin{cor}
Let $k$ be a field and let $S$ be the $k$-algebra from Subsection \ref{Rowen's construction}. Then $S$ is a primitive algebra of quadratic growth.
\end{cor}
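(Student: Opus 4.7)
The plan is to combine Proposition~\ref{ScongA} (which gives $S\cong A$, prime and just infinite) with the combinatorics of the Thue--Morse word $w$. Quadratic growth is the easier half: the monomial algebra $A$ has a $k$-basis indexed by finite subwords of $w$, so its degree-$n$ graded component has dimension equal to the subword complexity $p_w(n)$. It is classical that $p_w(n)=\Theta(n)$ (see \cite{AS}). Letting $V:=k+ka+kb$, one then computes $\dim V^n = 1+\sum_{i=1}^n p_w(i)=\Theta(n^2)$, so $S$ has quadratic growth.

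For primitivity, I would use the following elementary reduction: in a prime just-infinite algebra $R$, a maximal left ideal $L$ has infinite codimension if and only if its core (the largest two-sided ideal contained in $L$) is zero, because any nonzero two-sided ideal of $R$ has finite codimension by just-infiniteness and would then force $L$ itself to have finite codimension. Hence primitivity of $S$ is equivalent to the existence of an infinite-dimensional simple left $S$-module, i.e.\ a maximal left ideal of infinite codimension. As a preparatory step I would verify that $J(S)=0$: since $S$ is connected graded, Bergman's theorem gives that $J(S)$ is a graded ideal contained in the augmentation ideal $S_+$; if $J(S)\neq 0$ then $J(S)$ has finite codimension by just-infiniteness, and in the connected graded setting a dimension count forces $J(S)=S_+$, which is ruled out since $a+b\in S_+$ acts as the left shift on $V_\infty=\bigoplus_{i\ge 1}ke_i$ and is therefore not quasi-regular. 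So $J(S)=0$.

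With semiprimitivity established, $S$ embeds as a subdirect product of its primitive quotients. If $S$ were not primitive, every primitive ideal $P$ would be nonzero, hence of finite codimension, so every $S/P$ would be a finite-dimensional simple algebra $M_{n_P}(D_P)$. I would derive a contradiction by separately handling the two possibilities. If the matrix sizes $n_P$ are bounded, $S$ satisfies a standard polynomial identity, which can be excluded using the aperiodicity and uniform recurrence of the Thue--Morse word and the specific structure of its iterative monomial algebra; if the $n_P$ are unbounded, one obtains surjections $S\twoheadrightarrow M_{n}(k)$ with $n$ arbitrarily large, and comparing the image of $V^m$ against $M_n(k)$ together with the explicit quadratic growth of $\dim V^m$ should yield an inconsistency.

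The main obstacle is carrying out this last step cleanly. A more direct alternative would be to build an infinite-dimensional simple faithful $S$-module by hand from an aperiodic point $u\in X_w$ of the Thue--Morse subshift: the associated point module $P_u$ admits, thanks to aperiodicity, ``selector'' words in $a,b$ that pick out any single tail $u_{i+1}u_{i+2}\cdots$ while annihilating the others, and one would then construct a simple subquotient of an appropriate limit of such point modules along the shift orbit, using primeness and just-infiniteness to upgrade simplicity to faithfulness. Either route ultimately reduces to leveraging the fine combinatorial rigidity of the Thue--Morse word --- this is the essential non-formal input and the principal difficulty.
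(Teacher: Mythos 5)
Your treatment of quadratic growth is fine and matches the paper: reduce to $A$ via Proposition~\ref{ScongA} and use the linear subword complexity of the Thue--Morse word. The primitivity half, however, has a genuine gap, and you essentially acknowledge it yourself. After reducing to the statement ``semiprimitive, prime, just infinite, not PI $\Rightarrow$ primitive,'' you cannot close either branch of your case analysis. In the unbounded case, comparing $\dim V^m$ with $\dim M_{n_P}(D_P)$ does \emph{not} produce an inconsistency: an algebra of quadratic growth can in principle surject onto arbitrarily large matrix algebras (the image of $V^m$ only needs to grow by one dimension per step until it fills $M_{n_P}(D_P)$, which is perfectly compatible with $\dim V^m=\Theta(m^2)$). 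Your fallback route via point modules over the Thue--Morse subshift is likewise only a sketch of the hard part. The missing ingredient is Okni\'nski's trichotomy for finitely generated monomial algebras \cite{Trichotomy}: a prime finitely generated monomial algebra is primitive, or PI, or has nonzero locally nilpotent Jacobson radical. The paper invokes exactly this, then kills the PI case by aperiodicity of $w$ (via \cite[Theorem 4.1]{Iterative}) and kills the radical case by the graded-nil property of $J$ together with just-infiniteness (some power of $x+y$ would land in $J$ and hence be nilpotent, impossible in an infinite-dimensional monomial algebra generated by $x,y$). Without the trichotomy, or a genuine substitute for it, your argument does not establish primitivity.

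A smaller point: your claim that ``a dimension count forces $J(S)=S_+$'' is not correct as stated. A graded ideal of finite codimension contained in $S_+$ need only contain $S_n$ for all large $n$; it need not equal $S_+$. The conclusion $J(S)=0$ can still be salvaged along the paper's lines --- if $J\neq 0$ then $S_n\subseteq J$ for large $n$, so $(a+b)^n\in J$ is a homogeneous element of the graded Jacobson radical and hence nilpotent by Bergman's theorem, contradicting the fact that $a+b$ is the (non-nilpotent) shift --- but note that even with $J(S)=0$ in hand you are still missing the decisive step described above.
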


\begin{proof}
Using Proposition \ref{ScongA} it is equivalent to prove the assertion for $A$. To see that $A$ has quadratic growth, observe that the Thue-Morse word $w$ has linear subword complexity (e.g. see \cite{Complexity}) and so this shows that $A$ has quadratic growth (see \cite{Iterative} for more details on the connection between subword complexity and growth).

Since $A$ is a just infinite algebra, $A$ is prime \cite{FP}. Then by a result of Okni\'nski \cite{Trichotomy}, we have that $A$ is either primitive, satisfies a polynomial identity, or has nonzero locally nilpotent Jacobson radical.  Since $w$ is not eventually periodic, $A$ does not satisfy a polynomial identity \cite[Theorem 4.1]{Iterative}. If $J$ is the Jacobson radical of $A$ and suppose that $J$ is nonzero then $A/J$ is finite-dimensional since $A$ is just infinite.  Now $J$ is a homogeneous ideal \cite{Bergman} of finite codimension and so the image of $x+y$ in nilpotent in $A/J$. Thus some power of $x+y$ is in $J$ and since homogeneous elements of the Jacobson radical are nilpotent we then have that the image of $x+y$ in $A$ is nilpotent.  But this is impossible, since $A$ is an infinite-dimensional monomial algebra generated by $x$ and $y$.   We conclude that $A$ is primitive.
\end{proof}

We conclude with the following natural question.

\begin{ques}
Is there a finiteley generated graded nil algebra, generated in degree one which contains a free noncommutative subalgebra?
\end{ques}

It might be that a careful analysis of the example from \cite{AgataGrNil} would result in a modification which contains a free noncommutative subagebra.

\section*{Acknowledgments} 
We thank Blake Madill for many helpful comments and suggestions. We thank Louis Rowen both for the inspiration that his paper \cite{Rowen} gave us and for useful comments. We thank the referee for valuable remarks.

\end{document}